\numberwithin{equation}{section}
\definecolor{darkcyan}{rgb}{0.0, 0.55, 0.55}
\newcommand{\Hil}[0]{
\mathcal{H}
}
\newcommand{\Rd}[0]{
{\mathbb{R}^d}
}
\newcommand{\B}[0]{{\mathcal{B}}}
\newcommand{\A}[0]{{\mathcal{A}}}
\newcommand{\J}[0]{{\mathcal{J}}}
\newtheorem{theorem}{Theorem}[section]
\newtheorem{definition}[theorem]{Definition}
\newtheorem{proposition}[theorem]{Proposition}
\newtheorem{lemma}[theorem]{Lemma}
\newtheorem{corollary}[theorem]{Corollary}
\newtheorem{conj.}[theorem]{Conjecture}
\newtheorem{Bsp.}[theorem]{Example}
\def\BibTeX{{\rm B\kern-.05em{\sc i\kern-.025em b}\kern-.08em
    T\kern-.1667em\lower.7ex\hbox{E}\kern-.125emX}}
\begin{document}

\title{On the inverse-closedness of operator-valued matrices with polynomial off-diagonal decay}

\author{\IEEEauthorblockN{1\textsuperscript{st} Lukas Köhldorfer}
\IEEEauthorblockA{\textit{Acoustics Research Institute} \\
\textit{Austrian Academy of Sciences}\\
Vienna, Austria \\
lukas.koehldorfer@oeaw.ac.at}
\and
\IEEEauthorblockN{2\textsuperscript{nd} Peter Balazs}
\IEEEauthorblockA{\textit{Acoustics Research Institute} \\
\textit{Austrian Academy of Sciences}\\
Vienna, Austria \\
peter.balazs@oeaw.ac.at}
}

\maketitle

\begin{abstract}
We give a self-contained proof of a recently established $\B(\Hil)$-valued version of Jaffards Lemma. That is, we show that the Jaffard algebra of $\B(\Hil)$-valued matrices, whose operator norms of their respective entries decay polynomially off the diagonal, is a Banach algebra which is inverse-closed in the Banach algebra $\B(\ell^2(X;\Hil))$ of all bounded linear operators on $\ell^2(X;\Hil)$, the Bochner-space of square-summable $\Hil$-valued sequences.
\end{abstract}

\begin{IEEEkeywords}
Jaffard's Lemma, Wiener's Lemma, inverse-closed, spectral invariance, polynomial off-diagonal decay, operator-valued matrices, matrix algebras.
\end{IEEEkeywords}

\section{Introduction}
Let $\Hil$ be a Hilbert space and $\ell^2(X;\Hil)$ be the Bochner-space of $\Hil$-valued sequences $(g_l)_{l\in X}$, whose associated sequence of norms $(\Vert g_l \Vert)_{l\in X}$ is square-summable. Then $\ell^2(X;\Hil)$ itself is a Hilbert space and $\B(\ell^2(X;\Hil))$, the space of all bounded linear operators on $\ell^2(X;\Hil)$, is a C*-algebra. By the \emph{matrix calculus} for $\B(\ell^2(X;\Hil))$ as explained in \cite[Sec. 3.1]{koebacasheihomosha23}, every operator $A\in \B(\ell^2(X;\Hil))$ can be uniquely identified with a $\B(\Hil)$-valued matrix $[A_{k,l}]_{k,l\in X}$ in the sense that the action of $A$ on $(g_l)_{l\in X}\in \ell^2(X;\Hil)$ precisely corresponds to matrix multiplication of $[A_{k,l}]_{k,l\in X}$ with $(g_l)_{l\in X}$ viewed as column vector. Moreover, composition of operators from $\B(\ell^2(X;\Hil))$ corresponds to matrix multiplication of their respective matrix representations, and taking adjoints in $\B(\ell^2(X;\Hil))$ corresponds to the involution
\begin{equation}\label{involution}
([A_{k,l}]_{k,l\in X})^* = ([A_{k,l}^*]_{k,l\in X})^T,    
\end{equation}
where the exponent $T$ denotes transposition. Thus $\B(\ell^2(X;\Hil))$ can be identified with a certain Banach algebra of $\B(\Hil)$-valued matrices (see also \cite[Thm. 5.28]{kohl21}). 

Based on the latter point of view, we consider the \emph{Jaffard class} $\J_s = \J_s(X;\B(\Hil))$ of $\B(\Hil)$-valued matrices $A=[A_{k,l}]_{k,l\in X}$, for which there exists $C>0$, such that
\begin{equation}
    \Vert A_{k,l} \Vert \leq C (1+\vert k-l\vert)^{-s} \qquad (\forall k,l\in X).  
\end{equation}
It has been shown \cite{koeba25}, that for sufficiently regular index sets $X$ and a sufficiently large decay parameter $s>0$, the Jaffard class is a unital Banach *-algebra with respect to matrix multiplication and involution as in (\ref{involution}), which is contained in $\B(\ell^2(X;\Hil))$ and, in fact, \emph{inverse-closed} in $\B(\ell^2(X;\Hil))$, meaning that  
\begin{equation}\label{Wienerpair}
A \in \J_s \text{ and } \exists A^{-1} \in \B(\ell^2(X;\Hil)) \, \Longrightarrow \, A^{-1} \in \J_s.    
\end{equation}
Property (\ref{Wienerpair}) can be seen as a variant of \emph{Wiener's Lemma} on absolutely convergent Fourier series \cite{Wiener32} for $\B(\Hil)$-valued matrices with polynomial off-diagonal decay and is known as \emph{Jaffard's Lemma} \cite{ja90} in the scalar-valued setting. This result (compare also with \cite{koeba25,Baskakov1990,Bas97,baskri14,zbMATH06949773,kri11}) is not only interesting from an abstract point of view, but also has an enormous potential as a powerful tool for the study of localized g-frames \cite{koeba25-2,Krishtal2011}, operator theory \cite{douglas}, the study of Fourier series of operators \cite{feiko98} or harmonic quantum analysis \cite{werner84,Gosson21}.

In \cite{koeba25} the inverse-closedness of $\J_s$ in $\B(\ell^2(X;\Hil))$ is deduced from the inverse-closedness of certain weighted Schur-type algebras in $\B(\ell^2(X;\Hil))$ via methods from \cite{groelei06}. Here we give a more self-contained presentation of this fact by adapting methods from \cite{sun07a,hol22} to the $\B(\Hil)$-valued setting.

\section{Results}

\subsection{Banach algebra properties}

We consider $\B(\Hil)$-valued matrices indexed by a \emph{relatively separated} set $X\subset \Rd$, meaning that 
\begin{equation}\label{relativelyseparated}
\sup_{x\in \Rd} \vert X \cap (x+[0,1]^d) \vert < \infty.
\end{equation}
For such $X$ the following hold: 
\begin{lemma}\label{separatedlemma}
    Let $X\subset \Rd$ be a relatively separated set. 
\begin{itemize}
    \item[(a)] \cite[Lemma 1]{gr04-1} For any $s>d$, there exists a constant $C=C(s)>0$ such that 
    $$\sup_{x\in \Rd} \sum_{k\in X} (1+\vert x-k\vert)^{-s} = C <\infty$$
    \item[(b)] \cite[Lemma 2 (a)]{gr04-1} For any $s>d$, there exists a constant $C=C(s)>0$ such that for all $k,l\in X$
    $$\sum_{n\in X} (1+\vert k-n\vert)^{-s}(1+\vert l-n\vert)^{-s} \leq C (1+\vert k-l \vert)^{-s}.$$
\end{itemize}
\end{lemma}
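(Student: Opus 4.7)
The plan is to prove (a) by exploiting the relative separatedness of $X$ to reduce a sum over $X$ to a convergent sum over the integer lattice $\mathbb{Z}^d$, and then to derive (b) from (a) via a triangle-inequality case split. Concretely, for (a) I would partition $\Rd$ into the unit cubes $Q_m = m + [0,1]^d$ for $m \in \mathbb{Z}^d$. Condition (\ref{relativelyseparated}) gives a uniform constant $N$ with $|X \cap Q_m| \le N$ for every $m$. For $x \in \Rd$ and $k \in X \cap Q_m$, the elementary estimate $|x-k| \ge |x-m| - \sqrt d$ yields $(1+|x-k|)^{-s} \le C_{d,s}(1+|x-m|)^{-s}$. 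Summing over $k \in X \cap Q_m$ absorbs the factor $N$, and the remaining sum $\sum_{m\in \mathbb{Z}^d}(1+|x-m|)^{-s}$ is bounded uniformly in $x$ by comparison with the integral $\int_{\Rd}(1+|x-y|)^{-s}\,dy$, which converges precisely when $s>d$ and is translation invariant.

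For part (b), I would use that $|k-l| \le |k-n|+|l-n|$ forces, for each $n \in X$, at least one of $|k-n|$ and $|l-n|$ to be $\ge |k-l|/2$. Partitioning $X$ into the two corresponding subsets and applying $(1+|k-l|/2)^{-s}\le 2^s(1+|k-l|)^{-s}$, the ``far'' factor in each subsum pulls out as a constant multiple of $(1+|k-l|)^{-s}$, leaving a residual of the form $\sum_{n\in X}(1+|p-n|)^{-s}$ with $p\in\{k,l\}$, which is uniformly bounded by part (a). Adding the two contributions produces (b) with constant proportional to $2^s$ times the constant from (a).

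The main obstacle is purely bookkeeping: one must track the constants stemming from the cube partition, the separation bound $N$, and the case split, and verify that the integral comparison remains valid for the cubes adjacent to $x$. No genuinely new ingredient is needed beyond elementary analysis, which is precisely why the scalar-valued arguments of \cite{gr04-1} transfer verbatim to any relatively separated index set $X \subset \Rd$.
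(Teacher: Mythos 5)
Your proof is correct: the cube-partition and lattice-comparison argument for (a), and the triangle-inequality case split $\max(\vert k-n\vert,\vert l-n\vert)\geq \vert k-l\vert/2$ with the far factor pulled out and part (a) applied to the residual sum for (b), are exactly the standard arguments. The paper itself gives no proof of Lemma \ref{separatedlemma} but only cites \cite{gr04-1}, and your reasoning reproduces the proofs given there, so this is essentially the same approach.
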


\begin{definition}
Let $X\subset \Rd$ be relatively separated and $\nu_s (x):\Rd\longrightarrow [0,\infty)$ the polynomial weight function given by $\nu_s (x) = (1+\vert x \vert)^s$, $s\geq 0$. Let $\mathcal{J}_s = \mathcal{J}_s(X;\B(\Hil))$ be the space of all $\mathcal{B}(\Hil)$-valued matrices $A=[A_{k,l}]_{k,l\in X}$, for which 
\begin{equation}
    \Vert A \Vert_{\mathcal{J}_s} := \sup_{k,l\in X} \Vert A_{k,l} \Vert \nu_s(k-l)  
\end{equation}
is finite. We call $\mathcal{J}_s$ the \emph{Jaffard class} (and later $-$once justified$-$ the \emph{Jaffard algebra}) and $\Vert \, . \, \Vert_{\mathcal{J}_s}$ the \emph{Jaffard norm}. 
\end{definition}

Since $\mathcal{J}_s$ is nothing else than the weighted Bochner space $\mathcal{J}_s = \ell_{u_s}^{\infty}(X\times X; \B(\Hil))$, where $u_s(k,l) = \nu_s(k-l)$, the Jaffard class $(\mathcal{J}_s, \Vert\, . \, \Vert_{\mathcal{J}_s})$ is a Banach space \cite[Chap. 1]{HyNeVeWe16}.

\begin{proposition}\label{propboundedonKp}
Let $s>d+r$, $r\geq0$ and $m:\mathbb{R}^d\rightarrow [0,\infty)$ be a function for which there exists some $C>0$, such that $m(x+y)\leq C m(y) \nu_r(y)$ for all $x,y\in \Rd$. Then each $A \in \mathcal{J}_s$ defines a bounded operator on $\ell^p_m(X;\Hil)$ for every $\frac{d}{s-r} < p \leq \infty$.
\end{proposition}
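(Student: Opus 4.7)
The plan is to reduce the $\B(\Hil)$-valued bound to a scalar matrix bound on weighted spaces, and then split into the cases $p\geq 1$ (Schur-type argument) and $p<1$ (exploit the sub-additivity $(\sum a_l)^p\le \sum a_l^p$). In both cases, the key mechanism is that the weight-quotient $m(k)/m(l)$ gets absorbed into the polynomial decay, trading $s$ for $s-r$, and then Lemma \ref{separatedlemma}(a) finishes the job precisely when $s-r>d$ (and, for small $p$, when $(s-r)p>d$).

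First I would set $g=(g_l)_{l\in X}\in \ell^p_m(X;\Hil)$ and estimate entrywise
\[
  \|(Ag)_k\| \;\le\; \|A\|_{\mathcal{J}_s}\sum_{l\in X}(1+|k-l|)^{-s}\,\|g_l\|,
\]
which is the only place where the Jaffard hypothesis enters. The growth condition $m(x+y)\le Cm(y)\nu_r(y)$ applied with $x=k-l$, $y=l$ gives $m(k)\le C\,m(l)\,\nu_r(k-l)$, and combined with $(1+|k-l|)^{-s}\nu_r(k-l)=(1+|k-l|)^{-(s-r)}$ this already handles the case $p=\infty$ directly via Lemma \ref{separatedlemma}(a), since $s-r>d$.

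For $1\le p<\infty$ I would introduce the scalar matrix $C_{k,l}:=\|A_{k,l}\|\,m(k)/m(l)$ and the rescaled sequence $h_l:=\|g_l\|\,m(l)$. Then $\|(Ag)_k\|\,m(k)\le \sum_l C_{k,l}h_l$, and $A$ is bounded on $\ell^p_m(X;\Hil)$ with norm at most the $\ell^p$--norm of the scalar operator with kernel $C_{k,l}$. Since $C_{k,l}\le C\|A\|_{\mathcal{J}_s}(1+|k-l|)^{-(s-r)}$, Lemma \ref{separatedlemma}(a) together with the standard Schur test yield uniform row and column sum bounds, which gives boundedness on $\ell^p$ for every $p\in[1,\infty]$ by interpolation between the $\ell^1$ and $\ell^\infty$ endpoints.

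For the remaining range $d/(s-r)<p<1$ the Schur test is unavailable, so I would instead exploit the quasi-norm inequality $\bigl(\sum_l a_l\bigr)^p\le \sum_l a_l^p$ valid for $a_l\ge 0$ and $p\le 1$, obtaining
\[
 \|(Ag)_k\|^p m(k)^p \le \|A\|_{\mathcal{J}_s}^p\sum_{l}(1+|k-l|)^{-sp}\|g_l\|^p m(k)^p,
\]
then replacing $m(k)^p\le C^p m(l)^p\nu_r(k-l)^p$ and summing over $k$ first. This reduces the whole estimate to the convergence of $\sum_k(1+|k-l|)^{-(s-r)p}$ uniformly in $l$, which is precisely the hypothesis $(s-r)p>d$ (Lemma \ref{separatedlemma}(a)). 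The main subtlety I foresee is the transition at $p=1$: one has to make sure the two arguments really cover $(d/(s-r),\infty]$ without gap, and that in the sub-linear case one consistently works with the quasi-norm rather than with norm inequalities that silently assume $p\ge 1$.
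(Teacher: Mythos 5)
Your proposal is correct and follows essentially the same route as the paper: sub-additivity of $t\mapsto t^p$ for the range $\frac{d}{s-r}<p\le 1$, a direct estimate at $p=\infty$, and interpolation between these endpoints for $1<p<\infty$ (the paper invokes Riesz--Thorin from \cite[Thm. 2.2.1]{HyNeVeWe16}, while you package the same $\ell^1$/$\ell^\infty$ endpoint bounds as a Schur test on the scalar kernel $C_{k,l}=\Vert A_{k,l}\Vert\, m(k)/m(l)$, which is only a cosmetic difference). Note that you tacitly read the moderateness hypothesis as $m(x+y)\le C\, m(y)\,\nu_r(x)$, i.e.\ $m(k)\le C\, m(l)\,\nu_r(k-l)$ --- exactly as the paper's own proof does via the factor $(1+\vert k-l\vert)^{rp}$ --- which is the intended (typo-corrected) form of the condition as stated.
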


\begin{proof}
For $\frac{d}{s-r} < p \leq 1$, the function $f:\mathbb{R}_{\geq 0}\longrightarrow \mathbb{R}_{\geq 0}, f(x)=x^p$ is subadditive. Hence, for $g=(g_l)_{l\in X} \in \ell^p_m(X;\Hil)$, we obtain
\begin{flalign}
&\Vert Ag\Vert_{\ell^p_m(X;\Hil)}^p \notag \\
&= \sum_{k\in X} \left\Vert \sum_{l\in X} A_{k,l} g_l \right\Vert^p m(k)^p \notag \\
&\leq C^p \sum_{k\in X} \sum_{l\in X} \Vert A_{k,l}\Vert^p \Vert g_l\Vert^p m(l)^p (1+\vert k-l\vert)^{rp}  \notag \\
&\leq C^p \Vert A \Vert_{\J_s}^p \sum_{l\in X} \Vert g_l\Vert^p m(l)^p \sum_{k\in X} (1+\vert k-l\vert)^{-p(s-r)}  \notag \\
&\leq C_1 \Vert A \Vert_{\J_s}^p \Vert g\Vert_{\ell^p_m(X;\Hil)}^p, \notag 
\end{flalign}
where we applied \cite[Lemma 2 (a)]{gr04-1} is the last step. 

The case $p=\infty$ is proven similarly.
%

Finally, the case $1<p<\infty$ follows from Riesz-Thorin interpolation \cite[Thm. 2.2.1]{HyNeVeWe16}.
\end{proof}

Setting $r=0$ and $m\equiv 1$ above yields the following. 

\begin{corollary}\label{corboundedKp}
If $s>d$, then 
$$\mathcal{J}_s(X) \subset \bigcap_{1\leq p \leq \infty} \B(\ell^p(X;\Hil)).$$
\end{corollary}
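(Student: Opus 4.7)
The plan is to observe that Corollary \ref{corboundedKp} is an immediate specialization of Proposition \ref{propboundedonKp}. I would first verify that the hypotheses of the proposition are satisfied for the trivial choices $r=0$ and $m \equiv 1$: since $\nu_0 \equiv 1$, the submultiplicativity-type condition $m(x+y) \leq C\,m(y)\,\nu_r(y)$ collapses to $1 \leq C$, which holds with $C=1$. Moreover, $\ell^p_m(X;\Hil) = \ell^p(X;\Hil)$ under these choices.

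Next I would check that the range of $p$ provided by the proposition covers the entire interval $[1,\infty]$. The proposition gives boundedness on $\ell^p_m(X;\Hil)$ for $\frac{d}{s-r} < p \leq \infty$, which under our specialization becomes $\frac{d}{s} < p \leq \infty$. Since by assumption $s > d$, we have $\frac{d}{s} < 1$, and hence $[1,\infty] \subset (\frac{d}{s},\infty]$.

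Combining these two observations, Proposition \ref{propboundedonKp} yields $A \in \B(\ell^p(X;\Hil))$ for every $A \in \mathcal{J}_s(X)$ and every $p \in [1,\infty]$, which is exactly the claim. I do not foresee any obstacle, as the corollary is stated as a direct specialization and the verification is purely mechanical; the genuine work has already been carried out in the proof of the proposition via Lemma \ref{separatedlemma} and Riesz--Thorin interpolation.
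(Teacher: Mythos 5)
Your proposal is correct and matches the paper exactly: the paper derives the corollary by the one-line remark ``Setting $r=0$ and $m\equiv 1$ above yields the following,'' i.e.\ precisely the specialization of Proposition~\ref{propboundedonKp} you carry out. Your verification that the weight condition reduces to $1\leq C$ and that $s>d$ gives $\frac{d}{s}<1$, so that $[1,\infty]\subset(\frac{d}{s},\infty]$, fills in the mechanical details the paper leaves implicit.
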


Since $\J_s \subset \B(\ell^2(X;\Hil))$, the matrix calculus \cite[Sec. 3.1]{koebacasheihomosha23} for the Banach algebra $\B(\ell^2(X;\Hil))$, mentioned in the introduction, motivates us to define a multiplication on $\J_s$ via matrix multiplication. For the same reason, we define an involution on $\J_s$ as in (\ref{involution}).

\begin{proposition}\label{lemmaalgebra}
For $s>d$, the Jaffard class $(\mathcal{J}_s, \Vert \, . \, \Vert_{\mathcal{J}_s})$ is a unital *-algebra with respect to matrix multiplication and involution as defined in (\ref{involution}). Furthermore, the involution is an isometry.
\end{proposition}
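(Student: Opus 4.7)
The plan is to derive each algebra property directly from the matrix calculus of $\B(\ell^2(X;\Hil))$ recorded in \cite[Sec. 3.1]{koebacasheihomosha23} together with the summation bounds provided by Lemma \ref{separatedlemma}. By Corollary \ref{corboundedKp} every element of $\J_s$ already acts as a bounded operator on $\ell^2(X;\Hil)$, so both the composition of operators and the adjoint in $\B(\ell^2(X;\Hil))$ are well defined from the outset; my task is therefore only to verify that these operations preserve membership in $\J_s$ and behave appropriately with respect to $\Vert \, . \, \Vert_{\J_s}$.

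For closure under multiplication I would fix $A,B\in\J_s$ and estimate the $(k,l)$-entry of the product. By the matrix calculus, $(AB)_{k,l}=\sum_{n\in X}A_{k,n}B_{n,l}$, and the triangle inequality in $\B(\Hil)$ yields
\[
\|(AB)_{k,l}\| \;\leq\; \|A\|_{\J_s}\|B\|_{\J_s}\sum_{n\in X}\nu_s(k-n)^{-1}\nu_s(n-l)^{-1}.
\]
Lemma \ref{separatedlemma}(b) bounds the remaining sum by $C\,\nu_s(k-l)^{-1}$; multiplying by $\nu_s(k-l)$ and taking the supremum over $k,l\in X$ gives the submultiplicativity estimate $\|AB\|_{\J_s}\leq C\|A\|_{\J_s}\|B\|_{\J_s}$, so in particular $AB\in\J_s$. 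Associativity is inherited from $\B(\ell^2(X;\Hil))$. The identity matrix, with entries $I_{k,l}=\delta_{k,l}\identity{\Hil}$, satisfies $\|I\|_{\J_s}=\nu_s(0)=1$ and is therefore a unit in $\J_s$.

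For the involution I would use the formula (\ref{involution}), which gives $(A^*)_{k,l}=A_{l,k}^*$. Since the adjoint is isometric on $\B(\Hil)$ and $\nu_s(-x)=\nu_s(x)$, one has
\[
\|(A^*)_{k,l}\|\,\nu_s(k-l) \;=\; \|A_{l,k}\|\,\nu_s(l-k),
\]
and taking the supremum over $k,l\in X$ yields $\|A^*\|_{\J_s}=\|A\|_{\J_s}$. Conjugate-linearity of $*$ and $A^{**}=A$ are immediate from the corresponding entry-wise properties on $\B(\Hil)$. The only step requiring any real care is the appeal to the matrix calculus to legitimise the entry-wise formulas for products and adjoints on which the two estimates above rest; once that identification is in place, the entire statement reduces to a single application of Lemma \ref{separatedlemma}(b) and the symmetry of $\nu_s$, and I expect no further obstacle.
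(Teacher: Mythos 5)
Your proposal is correct and follows essentially the same route as the paper: the same entrywise estimate for $[AB]_{k,l}$ via Lemma \ref{separatedlemma}(b) giving $\Vert AB\Vert_{\J_s}\leq C\Vert A\Vert_{\J_s}\Vert B\Vert_{\J_s}$, the same identity element with $\Vert \mathcal{I}\Vert_{\J_s}=\nu_s(0)=1$, and the same appeal to the matrix calculus (justified through Corollary \ref{corboundedKp}) plus the symmetry $\nu_s(-x)=\nu_s(x)$ for the involution and its isometry. Your write-up is in fact slightly more explicit than the paper's on the *-algebra axioms, but there is no substantive difference in method.
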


\begin{proof}
Let $A=[A_{k,l}]_{k,l\in X}, B=[B_{k,l}]_{k,l\in X} \in \J_s$. For arbitrary $k,l\in X$ we have
\begin{flalign}
\Vert [A\cdot B]_{k,l} \Vert &\leq \sum_{n\in X} \Vert A_{k,n}\Vert \, \Vert B_{n,l} \Vert \notag \\
&\leq \Vert A \Vert_{\J_s} \, \Vert B \Vert_{\J_s} \sum_{n\in X} \nu_s(k-n)^{-1} \nu_s(n-l)^{-1} \notag \\
&\leq C \Vert A \Vert_{\J_s} \, \Vert B \Vert_{\J_s} \nu_s(k-l)^{-1} ,\notag
\end{flalign}
where we applied Lemma \ref{separatedlemma} (b) in the last step. Consequently
\begin{equation}\label{notBanachalgebranorm}
    \Vert A\cdot B\Vert_{\J_s} \leq C \Vert A \Vert_{\J_s} \, \Vert B \Vert_{\J_s}, 
\end{equation}
which implies that the Jaffard class is an algebra. Moreover, the neutral element $\mathcal{I}_{\J_s} = \mathcal{I}_{\B(\ell^2(X;\Hil))} = \text{diag}[\mathcal{I}_{\B(\Hil)}]_{k\in X}$ is contained in $\J_s$, since $\Vert \mathcal{I}_{\J_s} \Vert_{\J_s} = \nu_s(0) = 1$. The involution property in $\J_s$ follows from the matrix calculus with respect to $\B(\ell^2(X;\Hil))$ via Corollary \ref{corboundedKp}. Finally, $\nu_s(-x) = \nu_s(x)$ for all $x\in \Rd$ implies that the involution is an isometry.
\end{proof}

By (\ref{notBanachalgebranorm}), the Jaffard norm is \emph{not} a Banach algebra norm. However, we may equip the Jaffard class with the equivalent norm 
\begin{equation}\label{Jaffardnorm}
    \vert\Vert A \Vert\vert_{\mathcal{J}_s} := \sup_{\substack{B\in \J_s \\ \Vert B \Vert_{\mathcal{J}_s} = 1}} \Vert A\cdot B \Vert_{\mathcal{J}_s} \qquad (A \in \mathcal{J}_s),
\end{equation}
which is indeed a Banach algebra norm. As a consequence we obtain:

\begin{corollary}\label{propJaffardalgebra}\cite{koeba25}
For any $s>d$, $(\J_s, \vert\Vert \, . \, \Vert\vert_{\mathcal{J}_s})$ is a unital Banach *-algebra.    
\end{corollary}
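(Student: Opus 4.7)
The plan is to interpret $\vert\Vert A \Vert\vert_{\J_s}$ as the operator norm of the left regular representation $L_A : B \mapsto A\cdot B$ of $\J_s$ on itself, with the target $(\J_s, \Vert\cdot\Vert_{\J_s})$. Under this viewpoint, submultiplicativity of the new norm is automatic from $L_{AB} = L_A L_B$, while boundedness of every $L_A$ (hence finiteness of the defining supremum) and the estimate $\vert\Vert A\Vert\vert_{\J_s} \leq C\Vert A\Vert_{\J_s}$ are immediate consequences of (\ref{notBanachalgebranorm}).

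Next I would establish the reverse norm inequality. Inserting the admissible choice $B = \mathcal{I}_{\J_s}$, which has $\Vert \mathcal{I}_{\J_s} \Vert_{\J_s} = 1$ and satisfies $A\cdot \mathcal{I}_{\J_s} = A$, yields $\Vert A\Vert_{\J_s} \leq \vert\Vert A\Vert\vert_{\J_s}$. This proves equivalence of the two norms and, together with the completeness of $(\J_s, \Vert\cdot\Vert_{\J_s})$ noted just after the definition of the Jaffard class, transfers completeness to $(\J_s, \vert\Vert\cdot\Vert\vert_{\J_s})$. The unit normalization $\vert\Vert \mathcal{I}_{\J_s}\Vert\vert_{\J_s} = \sup_{\Vert B\Vert_{\J_s} = 1} \Vert B\Vert_{\J_s} = 1$ is then trivial.

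The one step requiring some care is the $*$-property. The algebraic identities $(A\cdot B)^* = B^*\cdot A^*$ and $A^{**} = A$ are already at hand from Proposition \ref{lemmaalgebra}, so only continuity of the involution with respect to the new norm needs to be rechecked. The mild asymmetry is that $\vert\Vert\cdot\Vert\vert_{\J_s}$ is modeled on \emph{left} multiplication whereas the involution interchanges left and right. I would absorb this by writing
\[
\Vert A^* \cdot B\Vert_{\J_s} = \Vert (A^* \cdot B)^* \Vert_{\J_s} = \Vert B^* \cdot A\Vert_{\J_s},
\]
using the $\Vert\cdot\Vert_{\J_s}$-isometry of the involution from Proposition \ref{lemmaalgebra}, and then applying (\ref{notBanachalgebranorm}) to the right-hand side to conclude $\vert\Vert A^*\Vert\vert_{\J_s} \leq C\Vert A\Vert_{\J_s} \leq C\vert\Vert A\Vert\vert_{\J_s}$. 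This small left/right swap is the only genuinely non-automatic point of the argument; everything else follows from Proposition \ref{lemmaalgebra} combined with the operator-norm interpretation of $\vert\Vert\cdot\Vert\vert_{\J_s}$.
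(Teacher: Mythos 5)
Your proposal is correct and is exactly the renorming argument the paper intends: Corollary \ref{propJaffardalgebra} is stated as an immediate consequence of Proposition \ref{lemmaalgebra} and the equivalent norm (\ref{Jaffardnorm}), and your verification (submultiplicativity via $L_{AB}=L_A L_B$, norm equivalence via the test element $B=\mathcal{I}_{\J_s}$ together with (\ref{notBanachalgebranorm}), transfer of completeness, and the left/right swap $\Vert A^*\cdot B\Vert_{\J_s}=\Vert B^*\cdot A\Vert_{\J_s}$ for the involution) supplies precisely the routine details the paper omits. The only point worth noting is that your argument yields boundedness, not isometry, of the involution with respect to $\vert\Vert\cdot\Vert\vert_{\J_s}$ (the isometry in Proposition \ref{lemmaalgebra} is for $\Vert\cdot\Vert_{\J_s}$, and need not survive the renorming since $\vert\Vert A^*\Vert\vert_{\J_s}$ is the norm of \emph{right} multiplication by $A$), which is consistent with the continuity-based notion of Banach *-algebra used here, e.g.\ in Hulanicki's Lemma \ref{Hulanickilemma}.
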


\subsection{Inverse-closedness}

In this section we show property (\ref{Wienerpair}), i.e., that $\J_s$ is inverse-closed in $\B(\ell^2(X;\Hil))$. While inverse-closedness in principal is an algebraic property, \emph{Hulanicki's Lemma} \cite{Hulanicki1972} allows for an analytical treatment of this task. We refer the reader to \cite[Prop. 2.5]{gr10-2} for a proof of this simple but beautiful fact. Let $\sigma_{\B}(B)$ and $r_{\B}(B)$ denote the spectrum and the spectral radius of an element $B$ from a Banach algebra $\B$, respectively.

\begin{proposition}[Hulanicki's Lemma]\label{Hulanickilemma}\cite{Hulanicki1972}
Let $\mathcal{A}\subseteq \mathcal{B}$ be a pair of unital Banach *-algebras with common identity and common involution, and suppose that $\mathcal{B}$ is symmetric, i.e. $\sigma_{\B}(B^*B) \subseteq [0,\infty)$ ($\forall B\in \B$). Then the following are equivalent:
\begin{itemize}
\item[(i)] $\mathcal{A}$ is inverse-closed in $\mathcal{B}$.
\item[(ii)] $r_{\mathcal{A}}(A)=r_{\mathcal{B}}(A) \qquad (\forall A=A^* \in \mathcal{A})$.
\item[(iii)] $r_{\mathcal{A}}(A) \leq r_{\mathcal{B}}(A) \qquad (\forall A=A^* \in \mathcal{A})$.
\end{itemize}
In case the above conditions hold, $\A$ is symmetric as well. 
\end{proposition}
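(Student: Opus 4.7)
The plan is the standard three-stage argument; only the implication \textbf{(iii)} $\Rightarrow$ \textbf{(i)} requires real work, and for it the self-adjoint hypothesis is exploited together with symmetry of $\B$ via a Neumann series trick.

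First I would dispose of the easy part. Because $\A$ and $\B$ share the unit, every element invertible in $\A$ is invertible in $\B$, hence $\sigma_{\B}(A) \subseteq \sigma_{\A}(A)$ and so $r_{\B}(A) \leq r_{\A}(A)$ for every $A \in \A$. This already proves that (iii) implies (ii), and it shows that (i) implies (ii) directly (since then the two spectra coincide). The step (ii) $\Rightarrow$ (iii) is trivial. Thus everything reduces to deriving (i) from (iii).

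For (iii) $\Rightarrow$ (i), let $A \in \A$ be invertible in $\B$. The trick is to write
\begin{equation}
A^{-1} = (A^*A)^{-1} A^*,\notag
\end{equation}
so since $\A$ is $*$-closed it suffices to show that $B := A^*A$ has its inverse in $\A$. The element $B$ is self-adjoint in $\A$ and invertible in $\B$; by symmetry of $\B$ one has $\sigma_{\B}(B) \subseteq [0,\infty)$, and since $0 \notin \sigma_{\B}(B)$ the spectrum is contained in a compact interval $[m,M]$ with $m > 0$. Pick a scalar $c$ with $0 < c < 2/M$. Then $\sigma_{\B}(\mathcal{I} - cB) \subseteq (-1,1)$, so $r_{\B}(\mathcal{I} - cB) < 1$. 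Since $\mathcal{I} - cB$ is self-adjoint and lies in $\A$, hypothesis (iii) yields
\begin{equation}
r_{\A}(\mathcal{I} - cB) \leq r_{\B}(\mathcal{I} - cB) < 1,\notag
\end{equation}
and the Neumann series $\sum_{n=0}^{\infty} (\mathcal{I} - cB)^n$ converges in $\A$ to $(cB)^{-1}$. Hence $B^{-1} \in \A$, and consequently $A^{-1} \in \A$. Finally, once (i) is established, the two spectra agree on every element of $\A$; applying this to elements of the form $C^*C$ with $C \in \A$ gives $\sigma_{\A}(C^*C) = \sigma_{\B}(C^*C) \subseteq [0,\infty)$, so $\A$ inherits symmetry from $\B$.

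The only delicate step is the Neumann series argument in the last display: it hinges on the combination of symmetry of $\B$ (to locate $\sigma_{\B}(B)$ on the positive real axis), invertibility (to bound it away from zero), and the self-adjoint spectral-radius inequality (iii) (to transfer the bound to $\A$). Without all three ingredients one cannot conclude that $r_{\A}(\mathcal{I} - cB) < 1$, which is exactly what makes the series converge in the smaller algebra.
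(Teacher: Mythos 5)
Your proof is correct, and it is essentially the standard argument: the paper itself does not prove this proposition but refers to \cite[Prop. 2.5]{gr10-2}, where the same scheme is used — the easy inclusion $\sigma_{\B}(A)\subseteq\sigma_{\A}(A)$ for the trivial implications, and for (iii) $\Rightarrow$ (i) the reduction $A^{-1}=(A^*A)^{-1}A^*$ followed by the Neumann-series trick for $\mathcal{I}-cB$, whose $\B$-spectral radius is pushed below $1$ using symmetry of $\B$ and transferred to $\A$ via (iii). Nothing to add; your treatment of the final symmetry claim (equality of spectra applied to $C^*C$) is also the intended one.
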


In order to be able to employ Hulanicki's Lemma, we need some preparation. 

\begin{lemma}\label{jakoblemma}\cite[Lemma 5.13]{hol22}
Let $X \subset \Rd$ be relatively separated, $s>d$ and $\tau_0 >0$ be given. For any $k\in X$ and any $\tau > \tau_0$, let $M_{1,k}^{\tau} := \lbrace n\in X: \vert k-n \vert_{\infty} \leq \lceil \tau \rceil \rbrace$ and $M_{2,k}^{\tau} := \lbrace n\in X: \vert k-n \vert_{\infty} > \lceil \tau \rceil \rbrace$. Then there exists a constant $C=C(X,s,\tau_0)>0$, such that for all $k\in X$ and all $\tau > \tau_0$
\begin{equation}\label{tau}
    \vert M_{1,k}^{\tau} \vert \leq C\tau^d \quad \text{and} \quad \sum_{n\in M_{2,k}^{\tau}} \nu_s(k-n)^{-1} \leq C\tau^{d-s} .
\end{equation}
\end{lemma}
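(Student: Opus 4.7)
The plan is to derive both estimates from the relative separation hypothesis \eqref{relativelyseparated} via direct cube/shell counting, essentially treating $X$ like a lattice with bounded multiplicity in every unit cell. Let $N := \sup_{x\in\Rd} |X\cap(x+[0,1]^d)| < \infty$, which will serve as the only ``separation constant'' entering the final $C$.

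For (a), I would observe that the cube $k+[-\lceil\tau\rceil,\lceil\tau\rceil]^d$ containing $M_{1,k}^\tau$ is covered by at most $(2\lceil\tau\rceil+1)^d$ axis-aligned unit cubes. Counting points with multiplicity $N$ gives $|M_{1,k}^\tau| \le N(2\lceil\tau\rceil+1)^d$, and since $\tau>\tau_0>0$ we can bound $2\lceil\tau\rceil+1 \le (2+3\tau_0^{-1})\tau$, absorbing everything into a uniform constant $C=C(X,\tau_0)$.

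For (b), the key idea is a dyadic-free shell decomposition. Setting $R_j := \lceil\tau\rceil+j$ for integers $j\ge 0$, I would partition $M_{2,k}^\tau$ into shells $A_j := \{n\in X : R_j < |k-n|_\infty \le R_j+1\}$. Each shell lies in a cube annulus of thickness one, so the same covering argument as in (a), applied to the outer cube minus the inner cube, produces a bound of the form $|A_j| \le N\bigl((2R_j+3)^d-(2R_j-1)^d\bigr)$, which by the mean value inequality is $\le C_1(\tau+j)^{d-1}$, uniformly for $\tau>\tau_0$. On the other hand every $n\in A_j$ satisfies $|k-n|\ge|k-n|_\infty > R_j \ge \tau+j$, giving $\nu_s(k-n)^{-1} \le (1+\tau+j)^{-s}$.

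Combining these yields
\[
\sum_{n\in M_{2,k}^\tau} \nu_s(k-n)^{-1} \;\le\; C_1 \sum_{j=0}^{\infty}(\tau+j)^{d-1}(1+\tau+j)^{-s} \;\le\; C_2 \sum_{j=0}^{\infty}(\tau+j)^{d-1-s},
\]
and since $s>d$ the integrand $t\mapsto(\tau+t)^{d-1-s}$ is decreasing and integrable; comparison with $\int_0^\infty (\tau+t)^{d-1-s}\,dt = (s-d)^{-1}\tau^{d-s}$ together with the isolated $j=0$ term $\tau^{d-1-s} \le \tau_0^{-1}\tau^{d-s}$ produces the desired $C\tau^{d-s}$ bound. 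The only real point requiring care is arranging the constants so that they are genuinely uniform in $\tau>\tau_0$ rather than in $\tau$ large; this is exactly where the hypothesis $\tau>\tau_0>0$ is used, via inequalities like $\tau^{-1}\le\tau_0^{-1}$ and $\lceil\tau\rceil \le (1+\tau_0^{-1})\tau$, so that no issue arises near $\tau=0$. Beyond that, everything reduces to standard volume-vs-surface estimates for cubes, and I see no genuine obstacle.
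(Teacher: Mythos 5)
Your proposal is correct and follows essentially the same route as the paper: a unit-cube covering argument for $\vert M_{1,k}^{\tau}\vert$, and a decomposition of $M_{2,k}^{\tau}$ into $\ell_{\infty}$-shells of thickness one with an $O\bigl((\tau+j)^{d-1}\bigr)$ point count per shell, summed against $(1+\tau+j)^{-s}$ and controlled by comparison with $\int_{\lceil\tau\rceil}^{\infty} x^{d-s-1}\,dx$. The only cosmetic difference is that you count the points of $X$ in each metric shell directly via the relative-separation constant $N$, whereas the paper first reduces W.L.O.G. to a separated set and groups points into integer-lattice cubes $X_{l,k}$ before summing over the lattice shells $S_m$; the uniformity-in-$\tau>\tau_0$ bookkeeping (e.g. $\lceil\tau\rceil\leq(1+\tau_0^{-1})\tau$ and $\tau^{-1}\leq\tau_0^{-1}$) is identical in both arguments.
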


\begin{proof} In order to show the first inequality in (\ref{tau}), recall that $X \subset \Rd$ being relatively separated means that $\gamma := \sup_{x\in \Rd} \vert X \cap (x+[0,1]^d) \vert$ is finite. Since $M_{1,k}^{\tau}$ can be covered by $(2 \lceil \tau \rceil)^d$ many translated unit cubes, we see that $\vert M_{1,k}^{\tau} \vert \leq (2 \lceil \tau \rceil)^d \gamma$. Since $\lceil \tau \rceil \leq 1+\tau \leq (\frac{1}{\tau_0} +1)\tau$, we obtain $\vert M_{1,k}^{\tau} \vert \leq 2^d \gamma (\frac{1}{\tau_0} +1)^d \tau^d$.

In order to show the second inequality in (\ref{tau}) fix some arbitrary $k\in X$. Since $\vert z \vert_{\infty} \leq \vert z \vert$ for all $z\in \Rd$, it suffices to estimate the series $\sum_{n\in M_{2,k}^{\tau}} (1+ \vert k-n\vert_{\infty} )^{-s}$. We may assume W.L.O.G. that $X$ is separated, i.e. that $\inf_{x,y\in X, x\neq y} \vert x-y \vert =: \delta > 0$, since any relatively separated set $X\subset \Rd$ is a finite union of separated sets \cite[Sec. 9.1]{ole1n}. Now, observe that if $n\in M_{2,k}^{\tau}$, i.e. $n\in X$ and $\vert k-n\vert_{\infty} > \lceil \tau \rceil$, then $\exists l\in S^{\tau} := \lbrace l\in \mathbb{Z}^d : \vert l \vert_{\infty} \geq \lceil \tau \rceil \rbrace$ such that $n = k+l+x$ for some $x\in [0,1)^d$. Thus, if for any $l\in S^{\tau}$ we define $X_{l,k} := X\cap (k+l+[0,1)^d)$, then we see that the family $\lbrace X_{l,k} : l\in S^{\tau}\rbrace$ covers $M_{2,k}^{\tau}$, hence 
$$\sum_{n\in M_{2,k}^{\tau}} (1+ \vert k-n\vert_{\infty} )^{-s} \leq \sum_{l\in S^{\tau}}\sum_{n\in X_{l,k}} (1+ \vert k-n\vert_{\infty} )^{-s} = (\ast).$$
Since, by our previous observation, $n = k+l+x$ for some $l\in S^{\tau}$ and some $x\in [0,1)^d$, we see that $1+ \vert k-n\vert_{\infty} = 1+ \vert l+x\vert_{\infty} \geq 1 +\vert l \vert_{\infty} - \vert x\vert_{\infty} \geq \vert l \vert_{\infty}$. Using this together with the observation that $\vert X_{l,k}\vert \leq C'$, where the constant $C'$ only depends on $\delta$ and the dimension $d$, we obtain that 
$$(\ast) \leq C' \sum_{l\in S^{\tau}} \vert l \vert_{\infty}^{-s} =: (\ast \ast).$$
Now, for each $m\in \mathbb{N}$ with $m \geq \lceil \tau \rceil$ set $S_m := \lbrace l\in \mathbb{Z}^d: \vert l \vert_{\infty} = m \rbrace$. Then $S^{\tau} = \bigcup_{m \geq \lceil \tau \rceil} S_m$ and each $S_m$ consists of the integer lattice points located on the surface of a cube of side-length $2m$. Since the number of lattice points on each face of such a cube equals $(2m+1)^{d-1}$ and there are $2d$ faces per cube in total, we obtain
\begin{flalign}
(\ast \ast) &\leq C' \sum_{m=\lceil \tau \rceil}^{\infty} \sum_{l\in S_m} \vert l \vert_{\infty}^{-s} \notag \\
&\leq 2d C' \sum_{m=\lceil \tau \rceil}^{\infty} m^{-s} (2m+1)^{d-1} \notag \\
&\leq 2d C' \sum_{m=\lceil \tau \rceil}^{\infty} m^{-s} (4m)^{d-1} \notag \\
&= 2d C' 4^{d-1} \left(\lceil \tau \rceil^{d-s-1} +   \sum_{m=\lceil \tau \rceil +1}^{\infty} m^{d-s-1} \right). \notag
\end{flalign}
Since $\lceil \tau \rceil^{d-s-1} \leq \lceil \tau \rceil^{d-s}$ and $m^{d-s-1} \leq \int_{m-1}^m x^{d-s-1} \, dx$, we obtain in total that  
\begin{flalign}
\sum_{n\in M_{2,k}^{\tau}} (1+ \vert k-n\vert )^{-s} 
& \leq C'' \left(\lceil \tau \rceil^{d-s} + \int_{\lceil \tau \rceil}^{\infty} x^{d-s-1} \, dx \right) \notag \\
&= C''\left(1+ \frac{1}{s-d}\right) \lceil \tau \rceil^{d-s}, \notag    
\end{flalign}
which yields the desired inequality.
\end{proof}

Next we show that $\mathcal{J}_s$ is continuously embedded in $\mathcal{B}(\ell^2(X;\Hil))$.

\begin{lemma}\label{propembedding}
Let $s>d$. Then there exists $C>0$, such that 
    \begin{equation}\label{embedding}
        \Vert A \Vert_{\mathcal{B}(\ell^2(X;\Hil))} \leq C \Vert A \Vert_{\mathcal{J}_s} \qquad (\forall A \in \mathcal{J}_s).
    \end{equation}
\end{lemma}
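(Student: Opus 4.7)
The plan is to reduce the operator-valued norm bound to a scalar Schur test. First I would associate to $g = (g_l)_{l\in X} \in \ell^2(X;\Hil)$ the scalar sequence $h := (\Vert g_l\Vert)_{l\in X} \in \ell^2(X)$, which satisfies $\Vert h\Vert_{\ell^2(X)} = \Vert g\Vert_{\ell^2(X;\Hil)}$. The triangle inequality in $\Hil$ gives, pointwise in $k\in X$,
\[
\Vert (Ag)_k \Vert_{\Hil} \leq \sum_{l\in X} \Vert A_{k,l} \Vert\, \Vert g_l \Vert = (Sh)_k,
\]
where $S := [\Vert A_{k,l}\Vert]_{k,l\in X}$ is a nonnegative scalar matrix. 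Consequently $\Vert Ag\Vert_{\ell^2(X;\Hil)} \leq \Vert Sh\Vert_{\ell^2(X)}$, so the task reduces to proving $\Vert S\Vert_{\B(\ell^2(X))} \leq C\Vert A\Vert_{\J_s}$.

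Second, I would apply the classical Schur test to $S$ with the constant weights $w_k\equiv 1$. From the very definition of $\Vert A\Vert_{\J_s}$ one has $\Vert A_{k,l}\Vert \leq \Vert A\Vert_{\J_s} \nu_s(k-l)^{-1}$, and Lemma \ref{separatedlemma}(a) provides a finite constant $C_0 := \sup_{k\in X}\sum_{l\in X} \nu_s(k-l)^{-1}$ (this is where $s>d$ enters). Since $\nu_s(k-l)=\nu_s(l-k)$, the same constant bounds the column sums of $S$. Schur's test then yields $\Vert S\Vert_{\B(\ell^2(X))} \leq C_0 \Vert A\Vert_{\J_s}$, proving (\ref{embedding}).

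There is no serious obstacle here; the only point worth verifying is that $\sum_{l\in X} A_{k,l} g_l$ converges absolutely in $\Hil$ for each $k\in X$, which justifies the pointwise triangle-inequality step. This is immediate from the row-sum estimate combined with $\sup_l \Vert g_l\Vert \leq \Vert g\Vert_{\ell^2(X;\Hil)}$. Note that (\ref{embedding}) could alternatively be extracted from Corollary \ref{corboundedKp} (specializing the proof of Proposition \ref{propboundedonKp} to $p=2$, $r=0$, $m\equiv 1$), but the direct Schur argument sidesteps Riesz-Thorin interpolation and makes the constant transparent.
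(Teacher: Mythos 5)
Your proof is correct, but it takes a genuinely different route from the paper's. The paper argues abstractly, following \cite[Lemma 5.2]{hol22}: it writes $\Vert A \Vert_{\B(\ell^2(X;\Hil))}^2 = \Vert A^*A \Vert_{\B(\ell^2(X;\Hil))} = r_{\B(\ell^2(X;\Hil))}(A^*A)$ via the C*-identity, compares spectral radii through the algebraic inclusion $\J_s \subset \B(\ell^2(X;\Hil))$ of Corollary \ref{corboundedKp} (so that $r_{\B(\ell^2(X;\Hil))}(A^*A) \leq r_{\J_s}(A^*A)$), evaluates $r_{\J_s}$ by Gelfand's formula in the unital Banach *-algebra $(\J_s, \vert\Vert \, . \, \Vert\vert_{\J_s})$ of Corollary \ref{propJaffardalgebra}, and concludes by submultiplicativity, the norm equivalence (\ref{Jaffardnorm}), and the isometry of the involution. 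Your argument — majorizing $A$ entrywise by the scalar matrix $S = [\Vert A_{k,l}\Vert]_{k,l\in X}$ and applying the classical Schur test with constant weights, with row and column sums controlled by Lemma \ref{separatedlemma}(a) — is more elementary and self-contained: it uses neither the C*-identity nor Gelfand's formula, nor even the algebra structure of $\J_s$, and it yields the explicit constant $C_0 = C(s)$ from Lemma \ref{separatedlemma}(a), whereas the paper's constant passes opaquely through the equivalence (\ref{Jaffardnorm}). Your side remarks are also sound: the absolute convergence of $\sum_{l\in X} A_{k,l} g_l$ follows exactly as you indicate, and the estimate could alternatively be read off from the proof of Proposition \ref{propboundedonKp} — though note that proof is itself Schur-type only for $\frac{d}{s}<p\leq 1$ and $p=\infty$ and reaches $p=2$ via Riesz--Thorin, while your argument hits $p=2$ directly. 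What the paper's detour buys is economy within its own development (it recycles Corollaries \ref{corboundedKp} and \ref{propJaffardalgebra}, already proved) and a template that transfers to situations where pointwise majorization is unavailable; your version is the sharper and more transparent proof of this particular lemma.
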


\begin{proof}
The statement is proved essentially as \cite[Lemma 5.2]{hol22}:

Let $A\in \J_s$ be arbitrary. Since $(\J_s, \vert\Vert \, . \, \Vert\vert_{\mathcal{J}_s})$ is a unital Banach *-algebra by Corollary \ref{propJaffardalgebra}, we have by Gelfand's formula for the spectral radius that $r_{\J_s}(A) =  \lim_{n\rightarrow \infty} \vert\Vert A^n \Vert\vert_{\J_s}^{\frac{1}{n}}$ for any $A\in \J_s$. Furthermore, since $\J_s \subset \mathcal{B}(\ell^2(X;\Hil))$ by Corollary \ref{corboundedKp}, a simple argument (see e.g. \cite[Lemma 2.4]{gr10-2}) shows that $ r_{\mathcal{B}(\ell^2(X;\Hil))}(A) \leq r_{\J_s}(A)$ for all $A\in \J_s$. Thus  
\begin{flalign}
\Vert A \Vert_{\mathcal{B}(\ell^2(X;\Hil))}^2 &= \Vert A^* A \Vert_{\mathcal{B}(\ell^2(X;\Hil))} \notag \\
&= r_{\mathcal{B}(\ell^2(X;\Hil))}(A^* A) \notag \\
&\leq r_{\J_s}(A^* A) \notag \\
&= \lim_{n\rightarrow \infty} \vert\Vert (A^* A)^n \Vert\vert_{\J_s}^{\frac{1}{n}} \notag \\
&\leq \vert\Vert A^* A \Vert\vert_{\J_s} \notag \\
&\leq \vert\Vert A^* \Vert\vert_{\J_s} \vert\Vert A \vert\Vert_{\J_s} \notag \\
&\leq C \Vert A \Vert_{\J_s}^2 , \notag
\end{flalign}
for all $A\in \J_s$, where we used (\ref{Jaffardnorm}) in the last line.
\end{proof}

\begin{lemma}\label{opnormineq}
Let $A = [A_{k,l}]_{k,l\in X}$ be a $\mathcal{B}(\Hil)$-valued matrix and $1\leq p \leq \infty$. If $A$ defines an element in $\mathcal{B}(\ell^p(X;\Hil))$, then the following hold:
\begin{itemize}
    \item[(a)] For $1\leq p < \infty$, 
    $$\sup_{l\in X} \sup_{\Vert f \Vert_{\Hil} = 1} \left( \sum_{k\in X} \Vert A_{k,l} f \Vert_{\Hil}^p \right)^{\frac{1}{p}} \leq \Vert A \Vert_{\mathcal{B}(\ell^p(X;\Hil))}.$$
    \item[(b)] $$\sup_{k,l \in X} \Vert A_{k,l} \Vert \leq \Vert A \Vert_{\mathcal{B}(\ell^p(X;\Hil))}.$$ 
\end{itemize}
\end{lemma}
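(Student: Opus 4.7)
The plan is to test the operator $A$ on vectors supported at a single index, which is the standard way to extract information about individual entries of an operator-valued matrix from its operator norm.

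For part (a), I would fix $l\in X$ and an arbitrary $f\in\Hil$ with $\Vert f\Vert_{\Hil}=1$, and consider the sequence $g=(g_n)_{n\in X}\in \ell^p(X;\Hil)$ defined by $g_l = f$ and $g_n = 0$ for $n\neq l$. This sequence satisfies $\Vert g\Vert_{\ell^p(X;\Hil)} = \Vert f\Vert_{\Hil} = 1$. Applying the matrix calculus, the $k$-th component of $Ag$ is simply $(Ag)_k = A_{k,l}f$, so that
\begin{equation*}
\left(\sum_{k\in X} \Vert A_{k,l}f\Vert_{\Hil}^p\right)^{\frac{1}{p}} = \Vert Ag\Vert_{\ell^p(X;\Hil)} \leq \Vert A\Vert_{\B(\ell^p(X;\Hil))}.
\end{equation*}
Taking the supremum over all $f$ in the unit sphere of $\Hil$ and then over $l\in X$ yields the claim.

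For part (b), in the case $1 \leq p < \infty$ the inequality is an immediate consequence of (a): since each individual summand is dominated by the full $\ell^p$-sum, we have $\Vert A_{k,l}f\Vert_{\Hil} \leq \big(\sum_{k'}\Vert A_{k',l}f\Vert_{\Hil}^p\big)^{1/p}$, and taking the supremum over $\Vert f\Vert_{\Hil}=1$ and over $k,l\in X$ gives the result. For $p=\infty$, I would repeat the test-vector construction: with the same $g$ as above (now $\Vert g\Vert_{\ell^\infty(X;\Hil)} = \Vert f\Vert_{\Hil} = 1$), the identity $(Ag)_k = A_{k,l}f$ yields $\Vert A_{k,l}f\Vert_{\Hil} \leq \sup_{k'\in X}\Vert A_{k',l}f\Vert_{\Hil} = \Vert Ag\Vert_{\ell^\infty(X;\Hil)} \leq \Vert A\Vert_{\B(\ell^\infty(X;\Hil))}$, and taking suprema over $f$ and over $k,l$ finishes the argument.

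There is no real obstacle here; both inequalities reduce to evaluating $A$ on sequences concentrated at a single index, and the only point worth mentioning is the separate treatment of $p=\infty$ in (b), which cannot be obtained from (a) but follows from the same test-vector idea. The identification $\Vert A_{k,l}\Vert = \sup_{\Vert f\Vert_{\Hil}=1}\Vert A_{k,l}f\Vert_{\Hil}$ of the operator norm in $\B(\Hil)$ is used implicitly throughout.
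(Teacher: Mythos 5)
Your proof is correct and follows essentially the same route as the paper: both arguments reduce the claim to evaluating $A$ on test sequences supported at a single index $l$ (the paper phrases this via the projections $P_l^p$ and a restricted supremum, you via explicit vectors $g$ with $g_l = f$, but these are the same computation). The only difference is that you spell out the $p=\infty$ case of (b), which the paper explicitly omits, and your treatment of it is correct.
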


\begin{proof}
(a) For $l\in X$ and $1\leq p \leq \infty$, define $P_l^p:\ell^p(X;\Hil) \longrightarrow \ell^p(X;\Hil)$, $P_l^p (f_k)_{k\in X} = (\delta_{k,l} f_k)_{k\in X}$. Then for arbitrary but fixed $l\in X$ and $1\leq p < \infty$ we see that
\begin{flalign}
    \Vert A \Vert_{\B(\ell^p(X;\Hil))}^p &= \sup_{\Vert f\Vert_{\ell^p(X;\Hil)} =1} \sum_{k\in X} \left\Vert \sum_{l\in X} A_{k,l} f_l  \right\Vert^p \notag \\
    &\geq \sup_{\substack{ f = P_l^p f \\ \Vert f\Vert_{\ell^p(X;\Hil)} =1}} \sum_{k\in X} \Vert A_{k,l} f_l \Vert^p \notag \\
    &= \sup_{\Vert f\Vert_{\Hil}=1} \sum_{k\in X} \Vert A_{k,l} f \Vert^p . \notag
\end{flalign}
Taking the $p$-th root and supremum over all $l\in X$ yields (a). 

(b) For $1\leq p < \infty$, we have by (a) that 
$$\Vert A_{k,l}\Vert \leq \sup_{\Vert f\Vert_{\Hil}=1} \left( \sum_{k\in X} \Vert A_{k,l} f \Vert^p \right)^{\frac{1}{p}} \leq \Vert A \Vert_{\mathcal{B}(\ell^p(X;\Hil))}$$ 
for all $k,l\in X$. Taking the supremum over all $k,l\in X$ yields the claim. The case $p=\infty$ is omitted. 
\end{proof}

After these preparatory results, we are able to prove the decisive ingredient for proving our main theorem. 

\begin{lemma}\label{lemmagamma}
Let $s>d$ and $\gamma = 1-\frac{d}{s} > 0$. Then there exists a positive constant $C$, such that 
    \begin{equation}\label{gammaineq}
        \Vert A^2 \Vert_{\mathcal{J}_s} \leq C \Vert A \Vert_{\mathcal{J}_s}^{2-\gamma} \Vert A \Vert_{\mathcal{B}(\ell^2(X;\Hil))}^{\gamma} \qquad (\forall A \in \mathcal{J}_s).
    \end{equation}
\end{lemma}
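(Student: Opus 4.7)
The plan is to exploit the two complementary bounds available for each matrix entry of $A$: the Jaffard decay $\Vert A_{k,n}\Vert \leq b\,\nu_s(k-n)^{-1}$, where $b := \Vert A\Vert_{\J_s}$, and the uniform bound $\Vert A_{k,n}\Vert \leq a$, where $a := \Vert A\Vert_{\B(\ell^2(X;\Hil))}$, the latter supplied by Lemma \ref{opnormineq}~(b). Lemma \ref{propembedding} furnishes $a \leq C_0 b$. I fix the constant $\tau_0 > 0$ from Lemma \ref{jakoblemma}.

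If $b \leq \tau_0^s a$, the submultiplicativity estimate (\ref{notBanachalgebranorm}) of Proposition \ref{lemmaalgebra} already yields $\Vert A^2\Vert_{\J_s} \leq C b^2 \leq C \tau_0^{s\gamma} a^\gamma b^{2-\gamma}$ (using $b^\gamma \leq \tau_0^{s\gamma} a^\gamma$). So I may assume $b > \tau_0^s a$ and set the scale $\tau := (b/a)^{1/s}$, which satisfies $\tau > \tau_0$. It now suffices to establish $\Vert (A^2)_{k,l}\Vert\,\nu_s(k-l) \leq C a^\gamma b^{2-\gamma}$ uniformly in $k, l \in X$.

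Fix $k, l$. In the close regime $\vert k-l\vert_\infty \leq 2\lceil\tau\rceil$ one has $\nu_s(k-l) \leq C\tau^s$, and since $\B(\ell^2(X;\Hil))$ is an algebra containing $A$ (Corollary \ref{corboundedKp}), Lemma \ref{opnormineq}~(b) applied to $A^2$ gives $\Vert (A^2)_{k,l}\Vert \leq a^2$. Hence $\Vert (A^2)_{k,l}\Vert\,\nu_s(k-l) \leq C a^2\tau^s = Cab \leq C' a^\gamma b^{2-\gamma}$ via $a \leq C_0 b$. In the far regime $\vert k-l\vert_\infty > 2\lceil\tau\rceil$, I split $(A^2)_{k,l} = \sum_n A_{k,n} A_{n,l}$ according to the position of $n$ relative to $k$ and $l$ into three pieces, using the sets from Lemma \ref{jakoblemma}: indices $n \in M_{1,k}^\tau$ (close to $k$), indices $n \in M_{2,k}^\tau \cap M_{1,l}^\tau$ (far from $k$, close to $l$), and indices $n \in M_{2,k}^\tau \cap M_{2,l}^\tau$ (far from both). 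On the first, the hypothesis forces $\vert n-l\vert_\infty \geq \vert k-l\vert_\infty/2$, so the Jaffard bound gives $\Vert A_{n,l}\Vert \leq Cb\,\nu_s(k-l)^{-1}$, and combining with $\Vert A_{k,n}\Vert \leq a$ and $\vert M_{1,k}^\tau\vert \leq C\tau^d$ produces a contribution of order $ab\tau^d\nu_s(k-l)^{-1}$. The second piece is estimated symmetrically. For the third, the Jaffard bound is used on both factors; a further split according to whether $\vert k-n\vert$ or $\vert n-l\vert$ exceeds $\vert k-l\vert/2$ absorbs one factor as $\leq C\nu_s(k-l)^{-1}$, and the tail sum of the other factor is bounded by $C\tau^{d-s}$ through the second inequality in Lemma \ref{jakoblemma}, giving a contribution of order $b^2\tau^{d-s}\nu_s(k-l)^{-1}$. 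Summing, $\Vert (A^2)_{k,l}\Vert\,\nu_s(k-l) \leq C(ab\tau^d + b^2\tau^{d-s})$, and substituting $\tau = (b/a)^{1/s}$ reduces both summands to $a^\gamma b^{2-\gamma}$ (this is where the exponent $\gamma = 1 - d/s$ arises).

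The main subtlety is that a naive two-fold split $M_{1,k}^\tau \cup M_{2,k}^\tau$ does not suffice for producing the interpolated exponent: on the far side one has to further isolate the indices close to $l$ in order to trade the operator-norm bound on $A_{n,l}$ against the Jaffard bound on $A_{k,n}$ without losing the weight $\nu_s(k-l)^{-1}$. The choice $\tau = (b/a)^{1/s}$ is then forced by simultaneously balancing the three error terms.
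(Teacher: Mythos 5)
Your proof is correct, and it reaches the bound $C(ab\tau^d+b^2\tau^{d-s})\nu_s(k-l)^{-1}$ with the same balancing scale $\tau=(b/a)^{1/s}$ as the paper (there written $\tau=\Vert A\Vert_{\J_s}^{\theta}\Vert A\Vert_{\B}^{-\theta}$ with $\theta=1/s$), but by a genuinely different decomposition. The paper never distinguishes near and far pairs $(k,l)$: it first applies the quasi-subadditivity $\nu_s(k-l)<2^s\bigl(\nu_s(k-n)+\nu_s(n-l)\bigr)$, which lets the Jaffard bound absorb the weight into one factor of each product $\Vert A_{k,n}\Vert\,\Vert A_{n,l}\Vert$, reducing the whole estimate to the row and column sums $\sum_n\Vert A_{k,n}\Vert$ and $\sum_n\Vert A_{n,l}\Vert$; each of these is then handled by a single two-fold split $M_{1,l}^\tau\cup M_{2,l}^\tau$ from Lemma \ref{jakoblemma}. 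This means your closing remark that ``a naive two-fold split does not suffice'' is accurate only within your organization of the argument, where $\nu_s(k-l)$ stays attached to the product entry; after the weight-splitting trick a two-fold split does suffice, which is precisely what makes the paper's version shorter. Your route is instead the classical Jaffard-style geometric argument: a close/far dichotomy in $\vert k-l\vert_\infty$, a three-piece split of the intermediate index $n$ using that $n$ cannot be simultaneously close to $k$ and $l$, the extra input $\Vert (A^2)_{k,l}\Vert\le a^2$ via Lemma \ref{opnormineq} (b) applied to $A^2$ in the close regime, and the preliminary case $b\le\tau_0^s a$ settled by (\ref{notBanachalgebranorm}). Note that this first case could have been eliminated the way the paper does it: by Lemma \ref{propembedding} one has $\tau\ge C_1^{-1/s}$ uniformly, so one may simply choose $\tau_0<C_1^{-1/s}$ when invoking Lemma \ref{jakoblemma} (you observe the needed ingredient $a\le C_0 b$ yourself). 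What your version buys is geometric transparency about which regime produces which of the two terms $ab\tau^d$ and $b^2\tau^{d-s}$; what the paper's buys is economy, since everything collapses to a one-parameter tail estimate applied twice, with no case distinctions. All of your individual estimates (the comparisons $\nu_s(n-l)^{-1}\le C\nu_s(k-l)^{-1}$ when $\vert n-l\vert\gtrsim\vert k-l\vert$, the cardinality and tail bounds from Lemma \ref{jakoblemma}, and the exponent bookkeeping yielding $a^\gamma b^{2-\gamma}$) check out.
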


\begin{proof}
We abbreviate $\mathcal{B} = \mathcal{B}(\ell^2(X;\Hil))$. Let $A\in \J_s$ be arbitrary and assume W.L.O.G. that $A\neq 0$. Hölder's inequality on $\mathbb{R}^2$ with respect to the exponent $s$ (and its conjugated Hölder exponent) implies that
$$\nu_s(k-l) < 2^{s} \left(\nu_s(k-n) + \nu_s(n-l)\right) \qquad (\forall k,l,n \in X).$$
Thus, for arbitrary $k,l \in X$ we can estimate
\begin{flalign}\label{est1}
&\Vert [A^2]_{k,l} \Vert \nu_s(k-l) \notag \\
&\leq \sum_{n\in X} \Vert A_{k,n} \Vert \Vert A_{n,l} \Vert \nu_s(k-l) \notag \\
&< 2^{s} \sum_{n\in X} \Vert A_{k,n} \Vert \Vert A_{n,l} \Vert \left( \nu_s(k-n) + \nu_s(n-l)\right) \notag \\
&\leq 2^s \Vert A \Vert_{\J_s} \left( \sum_{n\in X} \Vert A_{n,l} \Vert + \sum_{n\in X} \Vert A_{k,n} \Vert  \right) .\notag
\end{flalign}
Recall from Proposition \ref{propembedding}, that there exists $C_1 >0$, such that $\Vert A \Vert_{\B} \leq C_1 \Vert A \Vert_{\J_s}$. In particular, for $\theta >0$ (to be chosen later), there exists $\tau_0 >0$, such that 
\begin{equation}\label{est0}
\tau := \Vert A \Vert_{\J_s}^{\theta}\Vert A \Vert_{\B}^{-\theta} \geq C_1^{-\theta} > \tau_0 >0.
\end{equation}
Hence we are in the setting of Lemma \ref{jakoblemma} and may estimate  
\begin{flalign}
&\sum_{n\in X} \Vert A_{n,l} \Vert \notag \\
&\leq \sum_{n\in M_{1,l}^{\tau}} \Vert A_{n,l} \Vert + \sum_{n\in M_{2,l}^{\tau}} \Vert A_{n,l} \Vert \notag\\
&\leq \vert M_{1,l}^{\tau} \vert \Vert A \Vert_{\B} + \Vert A \Vert_{\J_s} \sum_{n\in M_{2,l}^{\tau}} \nu_s(n-l)^{-1} \notag \\
&\leq C_2 \Big( \tau^{d} \Vert A \Vert_{\B} + \Vert A \Vert_{\J_s} \tau^{d-s} \Big) \notag \\
&= C_2 \Big( \Vert A \Vert_{\J_s}^{d\theta} \Vert A\Vert_{\B}^{1-d\theta} + \Vert A \Vert_{\J_s}^{1+(d-s)\theta} \Vert A \Vert_{\B}^{(-(d-s)\theta)} \Big), \notag
\end{flalign}
where we applied Lemma \ref{opnormineq} (b) in the second estimate, and $C_2 >0$ denotes the constant arising in Lemma \ref{jakoblemma}. Analogous reasoning also yields 
\begin{flalign}
&\sum_{n\in X} \Vert A_{k,n} \Vert \notag \\
&\leq C_2 \Big( \Vert A \Vert_{\J_s}^{d\theta} \Vert A\Vert_{\B}^{1-d\theta} + \Vert A \Vert_{\J_s}^{1+(d-s)\theta} \Vert A \Vert_{\B}^{(-(d-s)\theta)} \Big). \notag    
\end{flalign}
Altogether, we obtain
\begin{flalign}
&\Vert [A^2]_{k,l} \Vert \nu_s(k-l) \notag \\
&\leq 2^{s+1} C_2 \Big( \Vert A \Vert_{\J_s}^{1+d\theta} \Vert A\Vert_{\B}^{1-d\theta} + \Vert A \Vert_{\J_s}^{2+(d-s)\theta} \Vert A \Vert_{\B}^{(-(d-s)\theta)} \Big) \notag
\end{flalign}
for all $k,l\in X$. Now, we choose $\theta = \frac{1}{s}>0$, which yields $1 + (d-s)\theta = \frac{d}{s} = d\theta$, and therefore 
$$\Vert [A^2]_{k,l} \Vert \nu_s(k-l) \leq 2^{s+2} C_2 \Vert A \Vert_{\J_s}^{2-\gamma} \Vert A\Vert_{\B}^{\gamma}.$$
Taking the supremum over all $k,l\in X$ yields the claim.
\end{proof}

Now we are finally able to prove the main result of this section. The main contribution to its proof is Lemma \ref{lemmagamma}. In fact, having Lemma \ref{lemmagamma} available at our hands, the proof of the subsequent theorem can be established exactly as the proof of \cite[Theorem 5.15]{hol22}. For completeness reason we provide the details.

\begin{theorem}\label{Jaffardinverseclosed}\cite[Cor. 4.5]{koeba25}
For every $s>d$, the Jaffard algebra $({\J}_s, \vert\Vert \, . \, \Vert\vert_{\J_s})$ is inverse-closed in $\B(\ell^2(X;\Hil))$. In particular, $({\J}_s, \vert\Vert \, . \, \Vert\vert_{\J_s})$ is a symmetric Banach algebra whenever $s>d$. 
\end{theorem}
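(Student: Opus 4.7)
The plan is to invoke Hulanicki's Lemma (Proposition \ref{Hulanickilemma}) with $\mathcal{A}=\J_s$ (equipped with $\vert\Vert\,.\,\vert\Vert_{\J_s}$) and $\mathcal{B}=\B(\ell^2(X;\Hil))$. The ambient algebra $\B$ is a C*-algebra, hence symmetric; by Proposition \ref{lemmaalgebra} and Corollary \ref{propJaffardalgebra}, the two algebras share the same identity and involution, and by Lemma \ref{propembedding} the embedding $\J_s\hookrightarrow \B$ is continuous. So Proposition \ref{Hulanickilemma} reduces our task to verifying
\begin{equation*}
r_{\J_s}(A)\leq r_{\B}(A)\qquad(\forall A=A^{*}\in\J_s).
\end{equation*}

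To establish this inequality, fix a self-adjoint $A\in \J_s$ and iterate Lemma \ref{lemmagamma} along the subsequence $n_k=2^k$. Since $A=A^{*}$, every power $A^{2^k}$ is also self-adjoint, so the C*-identity in $\B$ gives $\Vert A^{2^k}\Vert_{\B}=\Vert A\Vert_{\B}^{2^k}$. Applying Lemma \ref{lemmagamma} to $A^{2^k}$ in place of $A$ yields, with $\gamma=1-d/s\in(0,1)$ and the constant $C$ from that lemma,
\begin{equation*}
\Vert A^{2^{k+1}}\Vert_{\J_s}\leq C\,\Vert A^{2^k}\Vert_{\J_s}^{2-\gamma}\,\Vert A\Vert_{\B}^{\gamma\,2^k}.
\end{equation*}
Writing $a_k:=\Vert A^{2^k}\Vert_{\J_s}^{1/2^k}$ and $b:=\Vert A\Vert_{\B}$, taking $2^{k+1}$-th roots gives
\begin{equation*}
a_{k+1}\leq C^{1/2^{k+1}}\,a_k^{(2-\gamma)/2}\,b^{\gamma/2}.
\end{equation*}

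Setting $\alpha:=(2-\gamma)/2\in(1/2,1)$ and $\beta:=\gamma/2=1-\alpha$, and passing to logarithms with $u_k:=\log a_k$, $v:=\log b$, we obtain $u_{k+1}-v\leq \alpha\,(u_k-v)+(\log C)/2^{k+1}$. Iterating, $u_k-v\leq \alpha^k(u_0-v)+\alpha^k\sum_{j=1}^{k}(2\alpha)^{-j}\log C$; since $2\alpha>1$ the geometric series is bounded and $\alpha^k\to 0$, hence $\limsup_{k\to\infty}u_k\leq v$. In the original variables this reads
\begin{equation*}
r_{\J_s}(A)=\lim_{k\to\infty}\Vert A^{2^k}\Vert_{\J_s}^{1/2^k}\leq \Vert A\Vert_{\B}=r_{\B}(A),
\end{equation*}
where we used that $r_{\J_s}$ can be computed along any subsequence via Gelfand's formula (with respect to the equivalent Banach algebra norm $\vert\Vert\,.\,\vert\Vert_{\J_s}$), and that $r_{\B}(A)=\Vert A\Vert_{\B}$ because $\B$ is a C*-algebra and $A$ is self-adjoint. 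Hulanicki's Lemma then delivers inverse-closedness and, as a bonus, the symmetry of $\J_s$.

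The main obstacle is the correct bookkeeping of the iteration: one must ensure that the exponent $\alpha=(2-\gamma)/2$ lies strictly below $1$ (which is automatic from $\gamma>0$, i.e. $s>d$) so that the recursion contracts, and that the multiplicative constant $C$ from Lemma \ref{lemmagamma} is absorbed by taking roots. Lemma \ref{lemmagamma} itself—whose proof hinges on splitting each row/column sum at the threshold $\tau=\Vert A\Vert_{\J_s}^{\theta}\Vert A\Vert_{\B}^{-\theta}$ with the optimized $\theta=1/s$—is the genuine analytic input; once it is in hand, the remainder is the clean C*-algebraic reduction sketched above.
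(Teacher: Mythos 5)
Your proposal is correct, and its skeleton coincides with the paper's: both reduce the theorem via Hulanicki's Lemma (Proposition \ref{Hulanickilemma}) to the spectral-radius inequality $r_{\J_s}(A)\leq r_{\B}(A)$ for self-adjoint $A$, and both extract that inequality from Lemma \ref{lemmagamma} by Brandenburg's trick. Where you genuinely diverge is in the execution of the Brandenburg step. The paper applies Lemma \ref{lemmagamma} just once, with $A^{n}$ in place of $A$, takes $2n$-th roots, lets $n\to\infty$ using Gelfand's formula for the equivalent norm on both sides, obtains $r_{\J_s}(A)\leq r_{\J_s}(A)^{(2-\gamma)/2}\,r_{\B}(A)^{\gamma/2}$, and rearranges; this uses only $\lim_{n}\Vert A^{n}\Vert_{\B}^{1/n}=r_{\B}(A)$, so it needs neither self-adjointness nor the C*-identity and yields the inequality for \emph{all} $A\in\J_s$, the only case distinction being the trivial one $r_{\J_s}(A)=0$ when dividing. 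You instead iterate the lemma along the dyadic powers $A^{2^k}$, invoke the C*-identity $\Vert A^{2^k}\Vert_{\B}=\Vert A\Vert_{\B}^{2^k}$ (hence genuinely need $A=A^{*}$), and run a contraction recursion in logarithms with ratio $\alpha=(2-\gamma)/2<1$; this is the classical form of Brandenburg's trick as in \cite{sun07a,hol22}. Your bookkeeping is sound: the identity $u_{k}-v\leq\alpha^{k}(u_0-v)+\alpha^{k}\sum_{j=1}^{k}(2\alpha)^{-j}\log C$ is correct, $2\alpha=2-\gamma>1$ makes the sum bounded, and Gelfand's formula indeed passes to the subsequence $2^{k}$ and to the equivalent norm $\Vert\,.\,\Vert_{\J_s}$. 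Your variant buys the sharper conclusion $r_{\J_s}(A)\leq\Vert A\Vert_{\B}$ directly and avoids dividing by a possibly vanishing spectral radius; its only (cosmetic) cost is that before taking logarithms you should dispose of the degenerate case $A=0$ (equivalently $b=0$ or some $a_k=0$, which for self-adjoint $A$ in a C*-algebra forces $A=0$), a one-line remark worth adding — Hulanicki's Lemma in any case only requires the self-adjoint case, so your reliance on $A=A^{*}$ is harmless.
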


\begin{proof}
Since $\B := \B(\ell^2(X;\Hil))$ is a symmetric Banach *-algebra, we can deduce both the inverse-closedness of $\J_s$ in $\B$ and the symmetry of $\J_s$ from Hulanicki's Lemma \ref{Hulanickilemma}, once we have verified the inequality of spectral radii
\begin{equation}\label{spectralradiiineq}
    r_{\J_s}(A) \leq r_{\B}(A) \qquad (\forall A = A^* \in \J_s). 
\end{equation}
We establish the verification of (\ref{spectralradiiineq}) via \emph{Brandenburg's trick} \cite{BRANDENBURG75}, which relies on an estimate of the kind (\ref{gammaineq}).

By norm equivalence (\ref{Jaffardnorm}) there exist positive constants $K_1, K_2$ such that 
$$K_1 \vert\Vert A^n \Vert\vert_{\J_s} \leq \Vert A^n \Vert_{\J_s} \leq K_2 \vert\Vert A^n \Vert\vert_{\J_s}$$
for all $A\in \J_s$ and all $n\in \mathbb{N}$. Taking $n$-th roots and letting $n\rightarrow \infty$ implies via Gelfand's formula that  
$$r_{\J_s}(A) = \lim_{n\rightarrow \infty} \vert\Vert A^n \Vert\vert_{\J_s}^{\frac{1}{n}} = \lim_{n\rightarrow \infty} \Vert A^n \Vert_{\J_s}^{\frac{1}{n}} \qquad (\forall A\in \J_s).$$
Now we combine the latter observation with Lemma \ref{lemmagamma} and obtain that
\begin{flalign}
r_{\J_s}(A) &= \lim_{n\rightarrow \infty} \Vert A^{2n} \Vert_{\J_s}^{\frac{1}{2n}} \notag \\
&\leq \lim_{n\rightarrow \infty} C^{\frac{1}{2n}} \Big( \Vert A^{n} \Vert_{\J_s}^{\frac{1}{n}}\Big)^{\frac{2-\gamma}{2}}\Big( \Vert A^{n} \Vert_{\B}^{\frac{1}{n}}\Big)^{\frac{\gamma}{2}} \notag \\
&= r_{\J_s}(A)^{\frac{2-\gamma}{2}}r_{\B}(A)^{\frac{\gamma}{2}} \notag
\end{flalign}
holds for all $A\in \J_s$. Rearranging the latter yields 
$$r_{\J_s}(A)^{\frac{\gamma}{2}} \leq r_{\B}(A)^{\frac{\gamma}{2}} \qquad (\forall A\in \J_s).$$
Since $\gamma>0$, this implies (\ref{spectralradiiineq}). 
\end{proof}

\section*{Acknowledgment}

This work is supported by the project P 34624 \emph{"Localized, Fusion and Tensors of Frames"} (LoFT) of the Austrian Science Fund (FWF).

\bibliographystyle{abbrv}
\bibliography{biblioall}

\end{document}